\documentclass{amsart}
\usepackage{graphicx} 

\usepackage{amsfonts, amsthm, xcolor,hyperref}
\usepackage[all,cmtip]{xy}
\usepackage{mathtools}
\usepackage{enumitem}

\newcommand{\PP}{\mathbb{P}}
\newtheorem{thm}{Theorem}[section]
\newtheorem{prop}[thm]{Proposition}
\newtheorem{lem}[thm]{Lemma}
\newtheorem{cor}[thm]{Corollary}
\newtheorem{defn}[thm]{Definition}

\newtheorem{rem}[thm]{Remark}
\newtheorem{exmp}[thm]{Example}

\newcommand{\Deltatilde}{\tilde{\Delta}}

\DeclareMathOperator{\Sing}{Sing}

\DeclareMathOperator{\pic}{Pic}

\newcommand{\piDoubleCover}{f}
\newcommand{\XDoubleCover}{S}
\newcommand{\piDelta}{\varpi}
\newcommand{\tto}{\dashrightarrow}

\newcommand{\defi}[1]{\textsf{#1}}

\definecolor{ao(english)}{rgb}{0.0, 0.5, 0.0}

\title[Multisections of conic bundles]{Nonexistence of degree two rational multisections of conic bundles over the plane}

\author{Jeffrey Diller}
\address{Department of Mathematics, University of Notre Dame, 255 Hurley Hall, Notre Dame, IN 46556}
\email{diller.1@nd.edu}

\author{Lena Ji}
\address{Department of Mathematics, University of Illinois Urbana-Champaign, 214 Harker Hall, 1305 W. Green Street, Urbana, IL 61801}
\email{lenaji.math@gmail.com}

\author{Eric Riedl}
\address{Department of Mathematics, University of Notre Dame, 255 Hurley Hall, Notre Dame, IN 46556}
\email{eriedl@nd.edu}

\thanks{During the preparation of this article, J.D. was supported by NSF grant DMS 2246893, L.J. was supported by NSF grant DMS-2501990 and by Simons Foundation gift 00013959, and E. R. was supported by NSF CAREER grant DMS-1945944 and Simons Foundation grants 00011850 and 00013673.}

\begin{document}

\begin{abstract}
    We prove that a standard conic bundle $X \to \mathbb P^2_{\mathbb C}$ whose discriminant is very general of degree at least 18 admits no rational multisections of degree two. This is the first step towards proving a conjecture of Iskovskikh that there are conic bundle threefolds that are not unirational, since to prove that $X$ is not unirational, it suffices to show that there are no rational multisections of any degree. Proving Iskovskikh's conjecture would provide the first example of a rationally connected variety that is not unirational.
\end{abstract}

\maketitle

\section{Introduction}
There are many related notions of what it means for a projective variety $X$ to be similar to projective space. For instance, $X$ is rational if it is birational to projective space; unirational if there is a dominant rational map from projective space onto $X$; and rationally connected if two general points in $X$ can be joined by rational curves. It is easily seen that rational varieties are unirational, and that unirational varieties are rationally connected. Celebrated results from the 70's \cite{ClemensGriffiths, ArtinMumford, IskovskikhManin} give examples of unirational varieties that are not rational. However, it remains a longstanding open question whether there are smooth rationally connected varieties that are not unirational.

Conic bundles over \(\mathbb P^2\) form an important class of rationally connected varieties.
The total space of a standard conic bundle is known to be rational if the discriminant curve has degree \(\leq 4\);
furthermore, when the discriminant has degree \(\leq 8\), the total space is known to be unirational \cite{KollarMella}.
Moreover, there exist unirational examples of standard conic bundles over \(\mathbb P^2\) with discriminant of arbitrarily high degree \cite{MassarentiMella24} (see \cite{BCTSSD} for examples where the base of the conic bundle is a Hirzebruch surface).
However, when the discriminant curve has sufficiently high degree \emph{and} is sufficiently general, Iskovskikh conjectured that the total space is not unirational \cite{Iskovskikh79}. The goal of this paper is to offer some evidence in the direction of this conjecture.

A unirationality criterion due to Enriques states that the total space of a conic bundle \(\pi\colon X\to\mathbb P^2\) is unirational if and only if \(\pi\) admits a rational multisection of some degree. If $\pi$ admits an odd degree multisection, then Springer's theorem implies that $\pi$ admits a rational section, which implies that $X$ is rational.  So only even degree multisections can exist if \(X\) is not rational, and the rationality problem for conic bundle threefolds has been extensively studied (see \cite{Prokhorov-survey} for a general discussion).  Our main result is a nonexistence result for the first even degree:

\begin{thm}\label{thm:main-thm}
    Let \(\pi\colon X\to\mathbb P^2_{\mathbb C}\) be a standard conic bundle. If the discriminant curve of \(\pi\) has degree \(\geq 18\) and is very general, then \(\pi\) does not admit a rational multisection of degree \(2\).
\end{thm}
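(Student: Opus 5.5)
A degree-$2$ rational multisection of $\pi$ is a surface $T\subset X$, rational (birational to $\mathbb P^2$), with $\pi|_T\colon T\tto\mathbb P^2$ generically of degree $2$. Passing to normalization and resolution, we get a double cover $\piDoubleCover\colon \XDoubleCover\to\mathbb P^2$ branched along a plane curve $B$ of even degree $2m$, together with a rational map $\XDoubleCover\tto X$ over $\mathbb P^2$. The plan is to convert the existence of such a rational double cover into a statement about the discriminant $\Delta$ and the associated étale double cover $\Deltatilde\to\Delta$, and then to show that a very general $\Delta$ of degree $d\ge 18$ cannot satisfy it.

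The first step uses the Brauer class. The conic bundle gives a class $\alpha\in\mathrm{Br}(\mathbb C(\mathbb P^2))[2]$ whose residues along $\Delta$ are the class of $\Deltatilde\to\Delta$. The multisection exists if and only if $\alpha$ dies in $\mathrm{Br}(\mathbb C(\XDoubleCover))$, which means $\mathbb C(\XDoubleCover)$ splits the quaternion algebra. Equivalently, $\alpha$ is a corestriction-type class: there is $g\in\mathbb C(\XDoubleCover)$ with $\alpha=(h,N(g))$, where $\mathbb C(\XDoubleCover)=\mathbb C(\mathbb P^2)(\sqrt h)$. Taking residues, every component of $\Delta$ must be either a component of the branch curve $B$ or a curve over which $\XDoubleCover$ is unramified but along which the residue becomes trivial after pulling back. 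Concretely, the pulled-back class $\piDoubleCover^*\alpha$ must be unramified on a smooth model of $\XDoubleCover$, and since $\XDoubleCover$ is rational it has trivial Brauer group. Hence the residue of $\piDoubleCover^*\alpha$ along each component of $\piDoubleCover^{-1}(\Delta)$ vanishes. For $\Delta$ very general, hence irreducible with nontrivial double cover $\Deltatilde$ and smooth, this forces one of two things. Either $\Delta\subset B$, in which case ramification of index $2$ kills the residue. Or $\piDoubleCover^{-1}(\Delta)\to\Delta$ is the double cover $\Deltatilde\to\Delta$ itself (up to birational equivalence), so that the residue, a class in $H^1(\Delta,\mathbb Z/2)$, dies on the pullback.

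The second step treats the two cases separately.

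In the case $\Delta\subset B$, the surface $\XDoubleCover$ is a double plane branched along a curve of degree $\ge 18$ containing a smooth curve of degree $d\ge 18$. I would show such a double cover is not rational. The idea is that the canonical singularities that can be resolved away are controlled by singularities of $B$ away from $\Delta$, while the smooth component $\Delta$ of large degree contributes plurigenera. Concretely, I would produce a nonzero section of $2K$ (or of $K$ after adjunction) by using adjoint curves of degree $m-3$ vanishing appropriately along $\Sing B$, all of which lie off $\Delta$ except at the intersection points with the residual curve. This needs a count showing that the conditions imposed by $\Sing(B)$ cannot exhaust them.

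In the case where $\Delta$ splits in $\XDoubleCover$ compatibly with $\Deltatilde$, the curve $\Delta$ meets $B$ only with even contact, and the restriction $\XDoubleCover|_\Delta$ recovers $\Deltatilde$. I would then use the very generality of $(\Delta,\Deltatilde)$: the Prym data vary maximally. Meanwhile the pairs $(\Delta,\eta)$ for which $\eta$ is induced by a curve $B$ everywhere tangent to $\Delta$ with rational double cover form a countable union of families whose dimension I would bound. Such $\eta$ correspond to $\mathcal O(m)|_\Delta$-type theta characteristic differences cut by curves of degree $m$, and rationality of $\XDoubleCover$ bounds $m$ or the singularities of $B$ in terms of $d$.

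The main obstacle is this last case, specifically bounding the degree of $B$. A rational double plane can have arbitrarily high branch degree with many singularities (de Jonquières-type), so I would first reduce, via a Cremona/adjoint-linear-system minimization, to a branch curve of minimal degree with singularities of multiplicity $\le m$. I would then show that the resulting parameter count for tangent curves to $\Delta$ is smaller than $\dim$ of the moduli of $(\Delta,\eta)$ exactly when $d\ge 18$. This is where I expect the numerical threshold $18$ to arise, and where I expect most of the difficulty.
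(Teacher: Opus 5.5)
Your first step matches the paper's Proposition~\ref{prop:multisection-factors-through-Deltatilde} and Corollary~\ref{cor:deg-2-multisection-restriction}. Vanishing of the residue of $\piDoubleCover^*\alpha$ forces either $\Delta\subset B$, or $\piDoubleCover^{-1}(\Delta)\to\Delta$ is birationally the \'etale cover $\Deltatilde\to\Delta$. In the latter case $B$ cannot meet $\Delta$ transversely at a smooth point of $B$. (Your aside $\alpha=(h,N(g))$ is wrong, since such a class is always split, but you never use it.)

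For $\Delta\subset B$ the paper uses a different route, the classification (Theorem~\ref{thm:rational-double-plane-classification}): a positive-genus branch component of a rational double plane has genus $3$ or $4$, or is hyperelliptic, while $\Delta$ is non-hyperelliptic of genus $\geq 136$. Your plurigenus idea also works, and needs no count of adjoint conditions. On the canonical resolution $\rho\colon T\to\PP^2$, let $\Delta'$ be the strict transform of $\Delta$ and $m_i\in\{0,1\}$ its multiplicity at the $i$-th center ($\Delta$ is smooth). Let $H$ be a line. Then $2K_T+\hat B\geq 2K_T+\Delta'=\rho^*((d-6)H)+\sum_i(2-m_i)E_i^*$ is effective, so $P_2(\hat S)\geq 1$.

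The genuine gap is the case $\Delta\not\subset B$, which is the heart of the theorem. Your plan to Cremona-minimize $B$ and then count parameters against $(\Delta,\eta)$ breaks down for three reasons.
\begin{enumerate}
\item The Cremona map $\varphi$ that normalizes $B$ depends on $B$. In general it sends $\Delta$ to a singular curve of higher degree that is no longer smooth, of degree $d$, or very general. So a count of normal-form branch curves against very general curves of degree $d$ says nothing about the original pair. The paper notes exactly this: the classification gives no control of these quantities under arbitrary birational maps.
\item Without minimization, branch curves of rational double planes have unbounded degree (Example~\ref{exmp:high-multiplicity-rational-double-cover}). They can also have high-multiplicity points on $\Delta$ that absorb the even-contact conditions, so there is no uniform dimension estimate.
\item Nothing in your sketch produces the threshold $18$.
\end{enumerate}

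What is missing are the paper's two uniform inputs.
\begin{enumerate}
\item The inequality $5\deg B-\sum c_i-2g(B)\geq 8$ of Proposition~\ref{prop:5deg-bound}, valid for every normal rational double plane regardless of degree or singularities. It is proved on the canonical resolution from $A\cdot(A+K_T)=-2$, plus a proximity analysis of the virtual branch components.
\item Chen's algebraic hyperbolicity bound $\#\nu_j^{-1}(\Delta)\geq(\deg\Delta-4)\deg B_j-2g(B_j^\nu)+2$ for each component $B_j$. This is the only place very generality of $\Delta$ enters.
\end{enumerate}
Together they give $\#(\Delta\cap B)\geq(\deg\Delta-9)\deg B+8+2s$. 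This exceeds $\frac12\deg\Delta\deg B$ once $\deg\Delta\geq 18$, which forces a transverse intersection at a smooth point of $B$. That argument does not depend on $\eta$ at all, so the Prym/moduli considerations in your plan are unnecessary.
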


It remains an open question whether the conic bundle threefolds in Theorem~\ref{thm:main-thm} admit unirational parametrizations of degree two (see Remark~\ref{rem:rational-multisection-degree}).

For any conic bundle \(\pi\colon X\to\mathbb P^2\), there exist (not necessarily rational) surfaces \(S\subset X\) such that \(\pi|_S\colon S\to\mathbb P^2\) is generically finite of degree two.
Previously, Koll\'ar studied nonexistence of certain such \(S\) with low degree branch curve, and used this to prove that many conic bundle threefolds are not birationally equivalent to the underlying variety of a numerical Calabi--Yau pair \cite{Kollar17}.

There has been previous work studying unirationality questions for other rationally connected varieties. Pukhlikov has several papers showing the nonexistence of abelian Galois unirational parametrizations of certain Fano varieties and Mori fiber spaces \cite{Pukhlikov20,Pukhlikov21,Pukhlikov26}.
In another direction,
Koll\'ar observed that a unirational variety must contain rational surfaces through a general point, and
there have been several works restricting the rational surfaces on certain Fano hypersurfaces \cite{BeheshtiFewrationalSurfaces, BeheshtiRiedlPositivityResults, BeheshtiStarrRationalSurfaces, BeheshtiRiedlRestrictions}.

The structure of our paper is as follows. In Section~\ref{sec:conicBundles} we discuss the geometry of conic bundles $X \to \PP^2$ and give a criterion for when a generically finite morphism $S \to \PP^2$ has pullback $X_S\to S$ with a rational section. In Section~\ref{sec:doubleCovers}, we discuss previous work about rational double covers of $\PP^2$ and describe their classification. In Section~\ref{sec:hyperbolicity}, we discuss how algebraic hyperbolicity results for the complement of plane curves allow us to prove Theorem~\ref{thm:main-thm}.

Throughout we work over \(\mathbb C\). By a surface, we mean an integral separated scheme of finite type over \(\mathbb C\). We do not require curves to be irreducible or reduced.

\subsection{Acknowledgments}
We thank A. Calabri, C. Ciliberto, B. Hassett, J. Koll\'ar, B. Lehmann, A. Patel, A. Pukhlikov, V. V. Shokurov, and S. Tanimoto for helpful discussions.

\subsection{AI disclosure}
After completing an initial draft of this paper, we found a technical obstacle in showing a bound of the type in Section~\ref{sec:doubleCovers}. As we looked for solutions, we consulted with ChatGPT-5.6 Sol and it made a suggestion that is the basis for our proof of Proposition~\ref{prop:5deg-bound}. This paper was written by its authors, who have independently checked all mathematical assertions in the paper, and we take full responsibility for its correctness.

\section{Conic bundles} \label{sec:conicBundles}

In this section, we recall definitions and preliminary results about conic bundles.
For a reference on conic bundles, see, e.g., \cite{Prokhorov-survey}.

\begin{defn}
    Let \(S\) be a projective rational surface.
    A \defi{conic bundle} \(\pi\colon X\to S\) is a proper flat morphism such that every fiber is isomorphic to a plane conic and the generic fiber is smooth.
    If \(X\) and \(S\) are smooth, the \defi{discriminant} \(\Delta\subset S\) of a conic bundle \(\pi\) is the divisor parametrizing the singular fibers of \(\pi\), and
    the \defi{discriminant double cover} \(\piDelta\colon\Deltatilde\to\Delta\) is the Stein factorization of the normalization of \(\pi^{-1}(\Delta)\).
    A conic bundle \(\pi\colon X\to S\) is \defi{standard} if \(X\) and \(S\) are smooth and the relative Picard rank \(\rho(X/S)\) is \(1\).
\end{defn}
If \(\pi\) is standard and \(\Delta\) is smooth, then \(\piDelta\colon\Deltatilde\to\Delta\) is a nontrivial \'etale double cover of degree two.

Examples of standard conic bundles are plentiful.
For every smooth plane curve $\Delta$ of positive genus, there exists standard conic bundles of discriminant \(\Delta\) by \cite[\S3]{ArtinMumford} (see also \cite[Proposition 3.10]{Prokhorov-survey}).

A criterion due to Enriques states that for a conic bundle over a rational surface, the total space is unirational if and only if the conic bundle admits a rational multisection:

\begin{prop}[{Enriques criterion, see \cite[Proposition 10.1.1]{IskovskikhProkhorov}}]\label{prop:unirationality-criterion-Enriques}
    Let \(\pi\colon X\to\mathbb P^2\) be a conic bundle. Then \(X\) is unirational if and only if there is a projective rational surface \(S\) and a generically finite morphism \(S\to\mathbb P^2\) such that the base change \(\pi_S\colon X_S\to S\) admits a rational section.
\end{prop}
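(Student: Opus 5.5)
This is the classical Enriques criterion, and I would prove both directions directly. In each direction the aim is to relate a dominant rational map $\mathbb P^n \tto X$ to a rational surface mapping to $\mathbb P^2$ over which $\pi$ acquires a rational section. Throughout, $X$ is a threefold and $\pi$ has one-dimensional fibers.

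\emph{($\Leftarrow$)} Suppose $S\to\mathbb P^2$ is generically finite with $S$ a projective rational surface, and $\pi_S\colon X_S\to S$ has a rational section.
\begin{enumerate}[label=(\roman*)]
\item The generic fiber of $\pi_S$ is a smooth conic over $\mathbb C(S)$ with a rational point, so it is $\mathbb C(S)$-isomorphic to $\mathbb P^1_{\mathbb C(S)}$.
\item Hence $X_S$ is birational to $S\times\mathbb P^1$.
\item Since $S$ is rational, $S\times\mathbb P^1$ is rational, and so $X_S$ is rational.
\item The projection $X_S\to X$ is the base change of the dominant, generically finite map $S\to\mathbb P^2$, so it is dominant and generically finite.
\item Composing a birational map $\mathbb P^3\tto X_S$ with this projection gives a dominant rational map $\mathbb P^3\tto X$, so $X$ is unirational.
\end{enumerate}

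\emph{($\Rightarrow$)} Suppose $\phi\colon\mathbb P^n\tto X$ is dominant. First I would reduce to $n=3$: restricting $\phi$ to a general linear $\mathbb P^3\subset\mathbb P^n$ keeps it dominant, because $\dim X=3$. So assume $\phi\colon\mathbb P^3\tto X$ is dominant, hence generically finite.
\begin{enumerate}[label=(\roman*)]
\item Consider the composite $\psi=\pi\circ\phi\colon \mathbb P^3\tto\mathbb P^2$, and cut by a general plane $H\subset\mathbb P^3$.
\item Restricted to $H\cong\mathbb P^2$, the map $\psi|_H\colon H\tto\mathbb P^2$ is dominant and generically finite. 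The generic fibers of $\psi$ are curves, and a general plane meets each of them in finitely many points.
\item Resolve the indeterminacy of $\phi|_H$ by blowing up, obtaining a smooth projective rational surface $S$ with morphisms $S\to X$ and $S\to\mathbb P^2$. The second morphism is generically finite, and the two are compatible with $\pi$.
\item By the universal property of the fiber product, the morphism $S\to X$ over $\mathbb P^2$ gives a section $S\to X_S$ of $\pi_S$, defined at least generically. This is the required rational section.
\end{enumerate}

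The main point needing care is the generic finiteness of $\psi|_H$. This follows from a dimension count on the general fiber of $\psi$, which is a curve because $\phi$ is generically finite and the fibers of $\pi$ are one-dimensional. Bertini then ensures that a general $H$ meets this curve in finitely many points and that $\psi|_H$ is still dominant. The remaining steps are formal.
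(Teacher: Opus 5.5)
The paper gives no proof of its own for this proposition; it defers to Iskovskikh--Prokhorov. Your two-direction argument is correct and is the standard proof of the Enriques criterion. Using the resolved plane itself as $S$, rather than its image in $X$, also spares you Castelnuovo's theorem, since $S$ is visibly rational, and it matches the paper's formulation, in which $S$ need only map to $\mathbb P^2$.

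One harmless inaccuracy is in step (ii). A general plane $H$ need not meet \emph{every} fiber curve of $\psi$ in finitely many points. For example, if the fibers are the secant lines of a twisted cubic, a general plane contains three of them. You only need $\psi|_H$ to be dominant, and generic finiteness then follows from $\dim H = 2$.
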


By Springer's theorem \cite{Springer52}, \(\pi\) admits a rational multisection of odd degree if and only if it admits a rational section, and this is furthermore equivalent to the condition that \(X\) is birationally equivalent over \(\mathbb P^2\) to the product \(\mathbb P^1\times\mathbb P^2\).

\begin{rem}\label{rem:rational-multisection-degree}
    A rational multisection of \(\pi\) yields a degree \(d\) dominant rational map \(\mathbb P^3\dashrightarrow X\) of the same degree.
    However, the forward direction of the Enriques criterion does not immediately give any conditions on the degree of a rational multisection: the existence of a degree \(d\) unirational parametrization of \(X\) does not imply the existence of a degree \(d\) rational multisection of the conic bundle \(\pi\).
    
    For instance, if \(\pi\colon X\to \mathbb P^2\) is a standard conic bundle with discriminant of degree 3 or 4, then \(X\) is rational and the map \(\pi\) does not have a rational section (see \cite[Lemma 3.7(iii) and Corollary 5.6.1]{Prokhorov-survey}).
    For another example with \(d=3\), recall that the projection of a smooth cubic threefold from a line has the structure of a standard conic bundle over \(\mathbb P^2\), and this always admits a degree two rational multisection. For certain smooth cubic threefolds, \cite{YangYuZhu25} recently constructed degree \(3\) unirational parametrizations, and the associated conic bundle does not admit a degree three rational multisection by \cite{ClemensGriffiths}.
\end{rem}

The following necessary criterion for a rational multisection is well known to experts (see, e.g., \cite[Proposition 18]{Kollar17}). We include a proof here for completeness.

\begin{prop}\label{prop:multisection-factors-through-Deltatilde}
    Let \(\pi\colon X\to \mathbb P^2\) be a standard conic bundle with irreducible discriminant \(\Delta\).
    Let \(\piDoubleCover\colon\XDoubleCover\to\mathbb P^2\) be a generically finite morphism from a normal projective surface, and assume the branch locus of \(\piDoubleCover\) does not contain \(\Delta\).
    Let \(\Delta_{\XDoubleCover}\) denote the closure of the preimage of \(\Delta\) over the locus where \(f\) is finite, and let \(\Delta_{\XDoubleCover}^\nu\) denote its normalization.
    If the base change \(\pi_{\XDoubleCover}\colon X_{\XDoubleCover}\to\XDoubleCover\) admits a rational section, then the map \(\Delta_{\XDoubleCover}^\nu\to\Delta\) induced by \(f\) factors through the degree two map \(\piDelta\colon \Deltatilde\to\Delta\).
\end{prop}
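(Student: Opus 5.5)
The plan is to work locally over the generic point of each component of \(\Delta_{\XDoubleCover}\), using the rational section to produce a component of the singular fiber over that component. Fix an irreducible component \(C\) of \(\Delta_{\XDoubleCover}\). Since \(f\) is finite over a neighborhood of the generic point of \(C\) and the branch locus does not contain \(\Delta\), \(f\) is étale at the generic point \(\eta_C\) of \(C\). Hence \(\mathcal O_{\XDoubleCover,\eta_C}\) is a DVR, unramified over the DVR \(R=\mathcal O_{\mathbb P^2,\eta_\Delta\)}\), and a uniformizer \(t\) of \(R\) remains a uniformizer upstairs. Shrinking \(\mathbb P^2\) to a neighborhood of the generic point of \(\Delta\), where \(\Delta\) is smooth and \(\pi\) is standard, I will use the standard local form of \(X\) over \(R\): the conic \(x^2 - a y^2 - t z^2=0\), where \(a\in R\) is a unit whose residue class \(\bar a\in\mathbb C(\Delta)\) is not a square. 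The fact that \(\bar a\) is not a square is exactly the standardness condition: \(\rho(X/\mathbb P^2)=1\) forces the two lines of the singular fiber to be swapped by monodromy. Moreover, \(\mathbb C(\Deltatilde)=\mathbb C(\Delta)(\sqrt{\bar a})\), because the two components of the singular fiber \(x=\pm\sqrt{\bar a}\,y\) are defined over this extension.

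Next I base change to \(R'=\mathcal O_{\XDoubleCover,\eta_C}\). Its residue field is \(\mathbb C(C)\), and the conic over \(R'\) is still \(x^2-ay^2-tz^2=0\), with \(t\) a uniformizer. A rational section of \(\pi_{\XDoubleCover}\) gives a \(\operatorname{Frac}(R')\)-point, which by properness extends to an \(R'\)-point. I scale it to primitive coordinates \((x,y,z)\in R'^3\). Reducing mod \(t\) gives \(\bar x^2=\bar a\bar y^2\) in \(\mathbb C(C)\). If \(\bar y\neq0\), then \(\bar a\) is a square in \(\mathbb C(C)\), so \(\mathbb C(\Delta)(\sqrt{\bar a})\hookrightarrow \mathbb C(C)\) over \(\mathbb C(\Delta)\).

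If \(\bar y=0\), then \(\bar x=0\) and \(z\) is a unit. Then \(x^2-ay^2=tz^2\) has valuation exactly \(1\), while \(x^2-ay^2\) has valuation \(\ge 2\), which is a contradiction. This small valuation argument is the step I expect to need the most care: primitivity must be preserved, and \(t\) must stay a uniformizer. The latter is precisely where the hypothesis that \(\Delta\) is not in the branch locus is used. If \(f\) were ramified along \(C\), then \(t=s^2u\) and the argument would fail.

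Finally, a field embedding \(\mathbb C(\Deltatilde)\subset\mathbb C(C)=\mathbb C(C^\nu)\) over \(\mathbb C(\Delta)\) gives a rational map \(C^\nu\dashrightarrow\Deltatilde\) over \(\Delta\). Since \(C^\nu\) is a smooth curve and \(\Deltatilde\) is projective, this map extends to a morphism. Doing this for each component \(C\) gives the factorization of \(\Delta^\nu_{\XDoubleCover}\to\Delta\) through \(\piDelta\). Here \(\Deltatilde\) is the normalization of the Stein factorization, so it is the normal curve with function field \(\mathbb C(\Delta)(\sqrt{\bar a})\). I will note that components of \(\Delta_{\XDoubleCover}\) contracted by \(f\) do not occur, by the definition of \(\Delta_{\XDoubleCover}\) as a closure over the finite locus.
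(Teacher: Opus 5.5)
Your proposal is correct and follows essentially the same route as the paper. You extend the rational section over the generic point of each component of $\Delta_S$, use non-ramification to show its specialization avoids the node of the singular fiber, and conclude that $C^\nu \to \Delta$ lifts to $\tilde\Delta$. Your normal form $x^2-ay^2-tz^2$ and valuation argument make explicit, in local coordinates, the paper's unproved assertion that the section's image is not contained in the singular locus of $\pi_S^{-1}(\Delta_S)$, and the square root of $\bar a$ in $\mathbb{C}(C)$ plays the role of the paper's passage through normalizations and the Stein factorization.
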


\begin{proof}
    Let \(\varphi\colon S\dashrightarrow X_S\) be a rational section.
    Since \(S\) is normal, \(\varphi\) is defined at the generic point of every irreducible component of \(f^{-1}(\Delta)\) and in particular induces a rational map \(\varphi|_{\Delta_\XDoubleCover}\colon \Delta_\XDoubleCover\dashrightarrow X_\XDoubleCover\) with image contained in \(\pi_\XDoubleCover^{-1}(\Delta_{\XDoubleCover})\).
    Since \(\varphi\) is a rational section and since the branch locus of \(\piDoubleCover\) does not contain \(\Delta\), the image of each component of \(\Delta_\XDoubleCover\) under \(\varphi|_{\Delta_\XDoubleCover}\) is not contained in the singular locus of \(\pi_\XDoubleCover^{-1}(\Delta_{\XDoubleCover})\).
    Therefore the normalization of \(\Delta_\XDoubleCover\) maps to the normalization of \(\pi_\XDoubleCover^{-1}(\Delta_{\XDoubleCover})\), which then maps to the normalization of \(\pi^{-1}(\Delta)\). Since the diagram below commutes, the map \(\Delta_\XDoubleCover^\nu\to\Delta\) induced by \(\piDoubleCover\) factors through \(\piDelta\colon\Deltatilde\to\Delta\) as claimed.
    \[\xymatrixrowsep{1pc}\xymatrix{
    & \pi_{\XDoubleCover}^{-1}(\Delta_{\XDoubleCover})^\nu \ar[r] \ar[dd]^{\pi_{\XDoubleCover}} & \pi^{-1}(\Delta)^\nu \ar[dd]^{\pi} \ar[rd] \\
    & & & \Deltatilde \ar[ld]_-{\piDelta} \\
    \Delta_{\XDoubleCover}^\nu \ar[ruu]^-{\varphi|_{\Delta_\XDoubleCover}} \ar[r]^-\nu & \Delta_{\XDoubleCover} \ar[r]^-{\piDoubleCover|_{\Delta_{\XDoubleCover}}} & \Delta
    } \]
\end{proof}

\begin{cor}\label{cor:deg-2-multisection-restriction}
    In the setting of Proposition~\ref{prop:multisection-factors-through-Deltatilde}, assume additionally that \(\Delta\) is smooth and \(f\) is finite of degree \(2\). If \(\pi_S\) admits a rational section, then the branch curve $B$ of $f$ does not meet \(\Delta\) transversely at any smooth point of $B$.
\end{cor}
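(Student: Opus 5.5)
We argue by contradiction and reduce to a local computation at a single point. Suppose \(\pi_S\) admits a rational section and that there is a point \(p\in\Delta\cap B\) at which \(B\) is smooth and meets \(\Delta\) transversely. By Proposition~\ref{prop:multisection-factors-through-Deltatilde}, the map \(\Delta_S^\nu\to\Delta\) induced by \(f\) factors through \(\piDelta\colon\Deltatilde\to\Delta\). Because \(f\) is finite of degree \(2\), the curve \(\Delta_S=f^{-1}(\Delta)\) is finite of degree \(2\) over \(\Delta\). The plan is to show that \(\Delta_S\) must then be irreducible and that \(\Delta_S^\nu\to\Delta\) is ramified over \(p\). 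Since \(\Delta\) is smooth and \(\pi\) is standard, \(\piDelta\) is \'etale, and this will contradict the factorization.

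The first step is the local computation. Near \(p\), choose analytic (or \'etale) coordinates \((x,y)\) on \(\mathbb P^2\) such that \(B=\{x=0\}\), which is possible because \(B\) is smooth at \(p\). Transversality lets us arrange \(\Delta=\{y=0\}\). Over a neighborhood of \(p\), the surface \(S\) is then normal and is the double cover branched along the smooth curve \(B\), so locally \(S\cong\{t^2=x\}\) with coordinates \((t,y)\), and \(S\) is smooth there. The preimage of \(\Delta\) is \(\{y=0\}\) in these coordinates. It is a smooth curve parametrized by \(t\), and it maps to \(\Delta\) (parametrized by \(x\)) via \(t\mapsto t^2\). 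So \(\Delta_S\) is smooth over \(p\), it coincides locally with \(\Delta_S^\nu\), and \(\Delta_S^\nu\to\Delta\) has a ramification point of index \(2\) over \(p\).

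The second step is to show this is incompatible with the factorization. Suppose some component \(C\) of \(\Delta_S^\nu\) maps to \(\Delta\) through \(\Deltatilde\). The degree of \(C\to\Delta\) is at most \(2\), and \(\Deltatilde\) has degree \(2\) over \(\Delta\), so \(C\to\Deltatilde\) is birational. Since \(\Deltatilde\) is smooth, being étale over the smooth curve \(\Delta\), this map is an isomorphism. Hence \(C\to\Delta\) is \'etale, which contradicts the ramification at the point over \(p\): that point lies on a component mapping with degree \(2\). This argument also shows directly that \(\Delta_S\) is irreducible near \(p\), so no separate irreducibility step is needed.

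The only delicate point is confirming the hypotheses behind the local model. We need \(f\) to be finite near \(p\), which holds by assumption. We also need the branch locus not to contain \(\Delta\), which holds because \(B\) meets \(\Delta\) transversely. Finally, we need the local structure of a normal degree-\(2\) finite cover branched along a smooth curve, namely \(t^2=x\). This follows because a normal double cover of a smooth surface is given locally by \(t^2=h\) with \(h\) a local equation of the branch divisor, and \(h=x\) here up to a unit, which can be absorbed.
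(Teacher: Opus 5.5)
Your proof is correct and follows the argument the paper leaves implicit; the corollary is stated there without proof. At a point where \(B\) is smooth and meets \(\Delta\) transversely, \(\Delta_S\) is smooth and maps to \(\Delta\) with ramification index \(2\), which cannot happen if the map factors through the \'etale double cover \(\piDelta\). The one fact you use tacitly is that \(\Deltatilde\) is irreducible, i.e.\ that \(\piDelta\) is a \emph{nontrivial} \'etale double cover, as the paper notes for standard conic bundles with smooth \(\Delta\). This is what forces \(C\to\Deltatilde\) to be birational, and it is genuinely needed: ramification indices multiply, so \'etaleness of \(\piDelta\) alone would not exclude ramification coming from \(C\to\Deltatilde\).
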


\section{Rational double covers of \texorpdfstring{$\PP^2$}{P2}} \label{sec:doubleCovers}

Here we restrict attention to the case of \defi{double covers}, i.e.,
the case of finite surjective morphisms of degree two $f\colon S\to \PP^2$ from a projective normal surface.
Since \(S\) is normal, the branch divisor $B\subset\PP^2$ of $f$ is necessarily reduced.  It is moreover smooth if and only if $S$ is.  To handle singular $S$, we devote this section to obtaining the following bound.

\begin{prop}\label{prop:5deg-bound}
Let $f:S\to\PP^2$ be a double cover of $\PP^2$ by a (normal) rational surface $S$.  Let $x_1,\dots,x_k\in\PP^2$ be the singular points of the branch curve $B$ for $f$, and $c_1,\dots,c_k\geq 2$ denote the multiplicities of $B$ at these points.  Then 
    \[ 5\deg B - \sum_{i=1}^k c_i - 2g(B) \geq 8  \]
where $g(B)$ denotes the `total genus' of $B$, i.e., the sum of the geometric genera of the irreducible components of $B$.
\end{prop}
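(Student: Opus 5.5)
We plan to derive the bound from rationality of $S$ through the vanishing of the geometric genus $p_g$ and of the irregularity $q$ of a resolution of $S$. The double cover is determined by $B$ of even degree $2d$, with $\mathcal O(B)=L^{\otimes 2}$ where $L=\mathcal O(d)$. We pass to the canonical resolution: blow up the singular points of $B$ successively, each time replacing the branch divisor by the reduced even part $B_{j+1} = \sigma^*B_j - 2\lfloor m_j/2\rfloor E_j$, until the branch curve is smooth. Let $\tilde S$ be the resulting smooth double cover of the blown-up plane $Y$, which resolves $S$. We then have the standard formulas $\chi(\mathcal O_{\tilde S}) = 2\chi(\mathcal O_Y) + \tfrac12 L_Y\cdot(L_Y+K_Y)$ and $p_g(\tilde S)=h^0(K_Y+L_Y)$, where $L_Y=\tfrac12 B_{\mathrm{final}}$. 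Since $\tilde S$ is rational, $p_g=0$, so no adjoint curve of degree $d-3$ satisfies the conditions imposed at the singular points.

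The first key step is to turn $p_g=0$ into an inequality. The space of plane curves of degree $d-3$ has dimension $\binom{d-1}{2}$. At each infinitely near singular point of multiplicity $m$, the adjoint conditions require multiplicity at least $\lfloor m/2\rfloor-1$, which costs at most $\binom{\lfloor m/2\rfloor}{2}$ conditions. Vanishing therefore gives $\binom{d-1}{2}\le \sum \binom{\lfloor m_j/2\rfloor}{2}$. This is quadratic and by itself does not match the linear form of the target.

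The second, linear input comes from the genus formula: $g(B) = \binom{2d-1}{2} - \sum_j \binom{m_j}{2} + (\text{number of components} - 1)$, with the sum taken over all infinitely near points. We combine this with the first inequality to eliminate the quadratic terms $\sum m_j^2$. Because $\binom{\lfloor m/2\rfloor}{2}\approx m^2/8$ while $\binom{m}{2}\approx m^2/2$, multiplying the $p_g$ inequality by $4$ and subtracting should cancel the leading terms. What survives is linear in $d$ and in the $m_j$, and the target has exactly the shape $5\deg B - \sum c_i - 2g \ge 8$.

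The main obstacle is this bookkeeping. Two problems need care: the parity corrections (odd versus even $m_j$), and the infinitely near points, which appear in the genus formula but for which the statement counts only the multiplicities $c_i$ at the actual points $x_i$. If the crude cancellation leaves error terms of the wrong sign, the first thing to try is to use $q(\tilde S)=0$ and $\chi(\mathcal O_{\tilde S})=1$ exactly, rather than only $p_g=0$, since this gives an equality instead of an inequality. A second option is a Hurwitz-type argument on the branch curve: $\tilde S$ rational implies that the strict transform of $B$ in $Y$ satisfies $K_Y+L_Y$ not effective, and this can be tested against $B$ itself via adjunction. Concretely, one can pair $K_Y+L_Y$ with a nef class such as $\sigma^*H$ or $L_Y$ and use Riemann--Roch. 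Either way, the small cases ($\deg B\le 4$, and conics or lines as components) will need direct checking to reach the constant $8$.
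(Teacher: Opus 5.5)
Your framework is viable, and it is equivalent to the paper's: Riemann--Roch for $K_Y+L_Y$ on the canonical resolution, combined with the plane genus formula, yields the same identity the paper gets from $\hat B\cdot(\hat B+2K_T)=-8$ and adjunction. The gap is exactly the step you call bookkeeping. Its source is that you use one sequence $m_j$ for two different multiplicities. The adjoint conditions, equivalently $L_Y=\rho^*(dH)-\sum_j\lfloor m_j/2\rfloor E_j^*$ with $2L_Y=B_{\mathrm{final}}$, use the multiplicity $m_j$ of the whole branch divisor $B_{j-1}$ at $x_j$. That multiplicity counts the virtual branch components (earlier $E_h$ with $m_h$ odd) through $x_j$. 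The genus formula uses only the multiplicity $c_j$ of the strict transform of $B$. With a single sequence both of your formulas fail. Take the irreducible sextic with two infinitely near triple points (case~\eqref{item:calabri-thm-9.18-case-3} of Theorem~\ref{thm:rational-double-plane-classification}, $g(B)=4$). There $c_1=c_2=3$ but $m_1=3$, $m_2=4$. The inequality $\binom{d-1}{2}=1\le\sum_j\binom{\lfloor m_j/2\rfloor}{2}$ holds only with the $m_j$, while $g(B)=10-\sum_j\binom{c_j}{2}$ holds only with the $c_j$.

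Once you separate them, set $a_j=\lfloor m_j/2\rfloor$ and $\epsilon=\sum_j\binom{a_j}{2}-\binom{d-1}{2}\geq 0$ (this is your $p_g=0$ inequality). The quadratic terms then cancel exactly:
\[
5\deg B-\sum_{i\le k}c_i-2g(B)=8+(2\deg B-2s)+8\epsilon-X,\qquad X=\sum_{j=1}^{\ell}\bigl[4a_j(a_j-1)-c_j(c_j-1)\bigr]+\sum_{j\le k}c_j .
\]
So the constant $8$ appears uniformly, no small cases need separate treatment, and everything reduces to $X\le 0$. But $X$ is not a sum of nonpositive terms: in the sextic above, $x_2$ contributes $8-6=+2$ because a virtual component passes through it. In fact $-X=\sum_{j>k}c_j-\sum_{E_i\subset P}(4+E_i^2)$ identically, where $P$ is the sum of the virtual components. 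So $X\le 0$ is precisely the paper's inequality~\eqref{eqn:keyineq}, and that is where the real content lies.

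You are missing three ideas:
\begin{enumerate}
\item Each virtual component has $E_i^2=E_i\cdot B_{\mathrm{final}}=2E_i\cdot L_Y$ even. Hence $4+E_i^2>0$ only when $E_i^2=-2$, i.e.\ exactly one later center is proximate to $x_i$.
\item In that case Lemma~\ref{lem:alternatives} produces a center $x_j$ immediately proximate to $x_i$ with $c_j\ge 2$.
\item Each $x_j$ with $j>k$ is immediately proximate to a unique earlier center, so these $c_j$ are not double counted.
\end{enumerate}
None of your fallbacks addresses this. Passing from $p_g=0$ to $\chi(\mathcal O_{\tilde S})=1$ only sets $\epsilon=0$, and the inequality already pointed the right way. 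A Hurwitz-type argument, or pairing $K_Y+L_Y$ with a nef class, gives no control over how virtual components sit relative to later centers.
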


In order to present the proof we explain how (see \cite[\S 3]{Calabri}) the given double cover $f\colon S \to \PP^2$ admits a `canonical resolution'.  If $\rho\colon T\to \PP^2$ is a birational morphism from a smooth surface, then $f$ induces a double cover $\hat f\colon \hat S\to T$ of $T$ by a uniquely determined normal surface $\hat S$.  The branch curve $\hat B \subset T$ is again reduced, of the form $\tilde B + P$, where $\tilde B$ is the strict transform of $B$, and the support of $P$ is contained in the rational curves contracted by $\rho$.  The normal surface $\hat S$ is smooth if and only if $\hat B$ is, i.e., if and only if $\tilde B$ is the desingularization of $B$ and the irreducible components of $P$ are disjoint both from $\tilde B$ and from each other.  We call $\rho\colon T\to \PP^2$ a \defi{resolution} of $f$ when this happens.  

Regardless, let us factor $\rho$ into a sequence of point blowups
$$
T = T_\ell \overset{\sigma_\ell}{\to}T_{\ell-1} \overset{\sigma_{\ell-1}}{\to} \dots \overset{\sigma_1}{\to} T_0 = \PP^2.
$$
Let $x_i\in T_{i-1}$ denote the center of $\sigma_i$.  Let $E_i = \sigma_i^{-1}(x_i)\subset T_i$ denote the contracted curve as well as its strict transform by subsequent blowups.
A key fact is that if $B_i$ denotes the branch curve of the induced double cover $f_i\colon S_i\to T_i$, then $B_{i+1}$ is obtained by reducing the coefficient of $E_{i+1}$ in $\sigma_{i+1}^* B_i$ modulo $2$.  That is, if $E_{i+1}$ has odd multiplicity in $\sigma_{i+1}^* B_i$, then $B_{i+1} = \sigma_{i+1}^{-1}(B_i)$ is the set-theoretic total transform of $B_i$; and if $E_{i+1}$ has even multiplicity, then $B_{i+1} = \sigma_{i+1}^{-1}(B_i) - E_{i+1}$.  We call $E_{i+1}$ a \defi{virtual branch component} in the odd multiplicity case.
To get the \defi{canonical resolution of \(f\)}, which is uniquely determined up to isomorphism, one proceeds inductively, centering each blowup $\sigma_i$ at some singular point $x_i$ of $B_{i-1}$ until no such point remains.  From now on, we assume that $\rho$ is the canonical resolution of $f$.  The branch curve for the double cover $\hat f\colon \hat S \to T$ is then $\hat B = \tilde B +P$, where $P$ is the sum of the virtual branch components $E_i$.  For convenience, we order the blowups $\sigma_i$ so that $x_1,\dots,x_k\in \PP^2$ are the singular points of $B$.

We will say that a center $x_j$ is \defi{proximate} to a previous center $x_i$ if $x_j \in E_i$ (meaning the strict transform of $E_i$ in $T_{j-1}$) and \defi{immediately proximate} to $x_i$ if additionally $x_j$ is not proximate to $x_h$ for any $h>i$.  So while $x_j$ can be proximate to two distinct centers $x_i$, $i<j$, it can be immediately proximate to only one of them.  

To prove Proposition \ref{prop:5deg-bound}, we will compare the branch curve $\hat B\subset T$ with the total transform
$$
\rho^* B = \tilde B + \sum c_i E_i^*.
$$
Here $E_i^* = (\sigma_\ell\circ \dots \circ\sigma_{i+1})^* E_i$ is the total transform in $T$ of $E_i\subset T_i$, and $c_i$ denotes the multiplicity at $x_i\in T_{i-1}$ of the strict transform of $B$ by $\sigma_{i-1}\circ \dots\circ \sigma_1$.  In particular, $c_1,\dots,c_k$ are just the multiplicities in the statement of Proposition \ref{prop:5deg-bound}.

\begin{lem}
\label{lem:alternatives}
If $E_i$ is a virtual branch component of the canonical resolution of $f$, then one or both of the following are true.  
\begin{enumerate}
\item\label{item:alternative-1} There are at least two $j>i$ such that $x_j$ is proximate to $x_i$.
\item\label{item:alternative-2} There exists $j>i$ such that $x_j$ is immediately proximate to $x_i$ and $c_j\geq 2$. 
\end{enumerate}
\end{lem}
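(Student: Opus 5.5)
The plan is to argue by contraposition. Suppose $E_i$ is a virtual branch component and at most one later center $x_j$, $j>i$, lies on (the strict transform of) $E_i$. I will show that then there is exactly one such $j$, that $x_j$ is immediately proximate to $x_i$, and that $c_j\ge 2$. This gives the dichotomy of the lemma. The driving principle is that $\hat B$ is smooth. So in $T$ the component $E_i$ of $P$ must be disjoint from every other component of $\hat B$, and the only way to separate $E_i$ from the rest of the branch curve is to blow up points lying on $E_i$, i.e.\ centers proximate to $x_i$.

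\textbf{Step 1: the intersection that must be removed.} Let $m_i=\operatorname{mult}_{x_i}B_{i-1}$; this is the coefficient of $E_i$ in $\sigma_i^*B_{i-1}$. Since $x_i$ is a singular point of $B_{i-1}$, we have $m_i\ge 2$. Since $E_i$ is a virtual branch component, $m_i$ is odd, so $m_i\ge 3$. In $T_i$ the branch curve is $B_i=B'+E_i$, where $B'$ is the strict transform of $B_{i-1}$, and $B'\cdot E_i=m_i\ge 3$. The curve $B'$ consists of $\tilde B_i$ (the strict transform of $B$) together with some earlier virtual components $E_g$, $g<i$. Each such $E_g$ meets $E_i$ transversally in at most one point. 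Moreover, at any point of $E_i$ at most one earlier exceptional curve passes, since exceptional curves of distinct centers through a common point of $E_i$ would force a non-normal-crossing configuration.

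\textbf{Step 2: use the assumption.} If no center is proximate to $x_i$, the intersection $B'\cdot E_i\ge 3$ persists in $T$, because later blowups off $E_i$ do not change it. Then $\hat B$ is singular, a contradiction. So exactly one center $x_j$ is proximate to $x_i$. All of $B'\cap E_i$ must be concentrated at $x_j$, since any other intersection point could never be separated. Moreover, after the single blowup at $x_j$, the strict transforms of $E_i$ and the rest of the branch curve must already be disjoint along $E_i$, because no further centers lie on $E_i$.

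$x_j$ is immediately proximate to $x_i$: if $x_j$ also lay on some $E_h$ with $i<h<j$, then $x_h$ would lie on $E_i$. That would make $x_h$ a second center proximate to $x_i$, which is excluded.

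\textbf{Step 3: the multiplicity bound.} At $x_j$ we have $(B'\cdot E_i)_{x_j}=m_i\ge 3$. At most one earlier $E_g$ passes through $x_j$, and it contributes $1$. Hence $(\tilde B_{j-1}\cdot E_i)_{x_j}\ge 2$, where $\tilde B_{j-1}$ is the strict transform of $B$ in $T_{j-1}$. Blowing up $x_j$ lowers this local intersection number by exactly $\operatorname{mult}_{x_j}\tilde B_{j-1}\cdot\operatorname{mult}_{x_j}E_i=c_j$. For the strict transforms of $\tilde B$ and $E_i$ to become disjoint, we therefore need $c_j\ge(\tilde B_{j-1}\cdot E_i)_{x_j}\ge 2$. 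This gives alternative \eqref{item:alternative-2}.

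\textbf{Main obstacle.} The delicate point is Step 3: correctly accounting for earlier virtual components through $x_j$. These could absorb part of the intersection with $E_i$, and some could cease to be branch components after the parity change at $x_j$. I would handle this using the normal-crossing structure of exceptional curves (each meets $E_i$ transversally, and at most one passes through $x_j$), so they absorb at most $1$ of the intersection. This leaves at least $2$ for $\tilde B$, whose intersection with $E_i$ must vanish in $T$ because $\tilde B$ is part of $\hat B$ regardless of parities.
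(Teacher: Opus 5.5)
Your proof is correct and is essentially the paper's argument run contrapositively. Parity forces $\operatorname{mult}_{x_i}B_{i-1}\ge 3$, normal crossings leave $(\tilde B_{j-1}\cdot E_i)_{x_j}\ge 2$ at the unique proximate center, and the blowup formula then gives $c_j\ge 2$; this settles in one stroke the paper's separate subcases (two virtual components through $x_i$, two intersection points, tangency, singular intersection point).

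Your worry that earlier virtual components might stop being branch components is unfounded, since strict transforms of branch components always remain in the branch curve, but your argument never uses it.
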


\begin{proof}
The center $x_i$ lies on at most two virtual branch components $E_h$ for $h<i$. If there are exactly two such, then $E_i$ will intersect (the strict transforms in $T_i$ of) each at a different point.  But in the canonical resolution, $E_i$ must be disjoint from both these components.  Hence both intersections must be blown up to reach $T$, and we are in Case~\eqref{item:alternative-1} of the lemma.  

If $x_i$ lies on at most one virtual branch component, then $x_i \in \tilde B_{i-1}$, where $\tilde B_{i-1}$ is the strict transform of $B$ in $T_{i-1}$.  Moreover we claim that $x_i$ must be a singular point of $\tilde B_{i-1}$. Indeed, if $x_i$ is a regular point of $\tilde B_{i-1}$, then the multiplicity of $B_{i-1}$ at $x_i$ is either $1$ (contradicting that $x_i$ is a singular point of $B_{i-1}$) or $2$ (contradicting that $E_i$ is a virtual branch component).  Thus counting multiplicity, $\tilde B_i$ will meet $E_i$ at least twice, and at least one of the following occurs:
\begin{itemize}
    \item $\tilde B_i\cap E_i$ contains at least two distinct points;
    \item $\tilde B_i$ is tangent to $E_i$; or
    \item $\tilde B_i$ is singular where it meets $E_i$.
\end{itemize}
In the first two cases, it is necessary to blow up at least twice more at points of $E_i$ to separate $\tilde B_i$ from $E_i$, so we are in Case~\eqref{item:alternative-1} of the lemma.  In the last case, it is necessary to blow up the point $x_j = E_i\cap \tilde B_i$ which is immediately proximate to $x_i$.  Since $\tilde B_i$ is singular at $x_j$, we have $c_j\geq 2$, and we are in Case~\eqref{item:alternative-2} of the lemma.
\end{proof}

\begin{proof}[Proof of Proposition \ref{prop:5deg-bound}]
We freely use the notation from the above discussion about the canonical resolution of $f$.  The following further facts about canonical resolutions are established in \cite[\S3,\S4]{Calabri}.
\begin{itemize}
    \item $\hat B = 2A$ is divisible by $2$ in $\pic(T)$. 
    \item If $\hat S$ (equivalently $S$) is rational, then the arithmetic genus of $A$ vanishes, i.e., $A\cdot (A+K_T) = -2$.
\end{itemize}
These imply that 
\begin{equation*}
    \begin{split}
        -8 & = 4A\cdot(A+K_T) = \hat B\cdot(\hat B+2K_T) = \tilde B\cdot(\tilde B+2K_T) + P\cdot(P+2K_T) \\
        & = \tilde B\cdot(\tilde B + 2K_T) - P^2 + 2P\cdot(P+K_T) \\
        & = 2(g(B) +1 -s) -2 + \tilde B\cdot K_T - \sum_{E_i\subset P} (E_i^2 + 4), 
    \end{split}
\end{equation*}
where $s$ is the number of irreducible components of $\tilde B$ (hence also of $B$).
The third equality follows from the fact that $\tilde B$ and $P$ are disjoint; the fifth follows from adjunction, the fact that $\tilde B$ is a desingularization of $B$, and the fact that \(P\) is a disjoint union of some of the rational curves contracted by \(\rho\).

Since
$
K_T = \rho^* K_{\PP^2} + \sum_{i=1}^\ell E_i^*
$, \(E_i^*\cdot E_j^*= - \delta_{ij}\), and \(s\leq \deg B\),
we may continue our computation as follows:
\begin{equation*}
    \begin{split}
        -8 & = 2(g(B)-s) -3\deg B + \sum_{i=1}^\ell c_i - \sum_{E_i\subset P} (4+E_i^2) \\
        & \geq 2 g(B) - 5\deg B + \sum_{i=1}^k c_i + \sum_{i=k+1}^\ell c_i -\sum_{E_i\subset P} (4 + E_i^2).
    \end{split}
\end{equation*}
To complete the proof, we will argue that 
\begin{equation}
\label{eqn:keyineq}
\sum_{i=k+1}^\ell c_i - \sum_{E_i\subset P} (4 + E_i^2) \geq 0 .
\end{equation}
To this end, note that since each $E_i\subset P$ is disjoint from other components of $\hat B$, we have that 
$$
0 > E_i^2 = E_i \cdot \hat B = E_i\cdot 2A
$$
is even.   Moreover, $E_i^2 = -1 - n_i$ where $n_i$ is the number of $j>i$ such that $x_j$ lies on the strict transform in \(T_{j-1}\) of $\sigma_i^{-1}(x_i)$.  Hence the only cause for concern occurs when $n_i = 1$ and $4+E_i^2 = 2$. 

But for each $E_i\subset P$, we can invoke Lemma \ref{lem:alternatives}.  In Case~\eqref{item:alternative-1} of that Lemma, we have that $4+E_i^2 \leq 1$ so \(-(4+E_i^2) \geq 0\).  In Case~\eqref{item:alternative-2}, we have that 
$$
4 + E_i^2 \leq 2 \leq c_j
$$
for some $j$ such that $x_j$ is immediately proximate to $x_i$.  Since $x_1,\dots,x_k$ are not proximate to any $x_i$ and, for each \(j>k\), $x_j$ is immediately proximate to exactly one $x_i$,
this shows that
\(\sum_{i=k+1}^\ell c_i - \sum_{E_i\subset P, \text{ Case~\eqref{item:alternative-2}}} (4+E_i^2) \geq 0\), so
the inequality \eqref{eqn:keyineq} is proved.
\end{proof}

As mentioned in the introduction, Koll\'ar proved nonexistence of double sections of conic bundles with low degree branch curve.
In particular, he proved that if the discriminant curve \(\Delta\) of a standard conic bundle \(X\to\mathbb P^2\) is general, then there is no surface \(S\subset X\) whose normalization maps to \(\mathbb P^2\) generically finitely with branch curve of degree \(<\frac{1}{2}\deg\Delta - 1\) \cite[Lemma 19]{Kollar17}. However, the following example shows that there exist rational double covers of \(\PP^2\) with branch curve of arbitrarily high degree.

\begin{exmp}[{\cite[Lemma 8.6]{Calabri}}]\label{exmp:high-multiplicity-rational-double-cover}
    Let \(d\geq 1\), and let \(B\subset\PP^2\) be a reduced curve of degree \(2d\) with a point \(x_0\) of multiplicity \(2d-1\) or \(2d-2\). Then the double cover of \(\PP^2\) branched along \(B\) is a normal rational surface.
\end{exmp}

The following example shows that the lower bound in Proposition~\ref{prop:5deg-bound} is sharp.
\begin{exmp}[{\cite[Proof of Proposition 8.7, case \((c^1)\)]{Calabri}}]\label{exmp:sharp}
    Let \(d\geq 2\), and let \(B\) be the plane curve \(M_1 + M_2 + L_1 + \cdots + L_{2d-2}\), where \(L_1,\ldots,L_{2d-2}\) are distinct lines passing through a common point \(x_0\), and \(M_1,M_2\) are distinct lines not containing \(x_0\) and such that the intersection point \(M_1\cap M_2\) is not contained in any \(L_i\). Then \(g(B)=0\), and by Example~\ref{exmp:high-multiplicity-rational-double-cover}, the double cover of \(\mathbb P^2\) branched along \(B\) is a normal rational surface.
    The singular points of \(B\) are \(x_0\) with multiplicity \(2d-2\); the \(4d-4\) intersection points \(M_i \cap L_j\), each with multiplicity 2; and \(M_1\cap M_2\) with multiplicity 2.
    So \(k=4d-2\) and \(\sum_{i=1}^k c_i = 10d-8\). For any integer \(e\geq 0\) we have \[e\deg B - \sum_{i=1}^k c_i - 2g(B) = 2d(e-5)+8.\] In particular \(5\deg B - \sum_{i=1}^k c_i - 2g(B) = 8\).
\end{exmp}
Example~\ref{exmp:sharp} also shows that for any \(e < 5\), the quantity \(e\deg B - \sum_{i=1}^k c_i - 2g(B)\) does not have a uniform lower bound among branch curves \(B\) of normal rational double covers of \(\mathbb P^2\).

\subsection{Birational classification of rational double covers of \texorpdfstring{\(\mathbb P^2\)}{P2}}

In this section, we recall a classification result that we will use in the next section.
Normal double covers $f_1\colon S_1\to\PP^2$ and $f_2\colon S_2\to \PP^2$ are \defi{birationally equivalent} if there are birational transformations $\tilde\varphi\colon S_1\tto S_2$ and $\varphi\colon \PP^2\tto \PP^2$ such that $f_2\circ \tilde\varphi = \varphi\circ f_1$.  In this case, the branch curve $B_\varphi$ of $f_2$ is (as before) the reduction mod $2$ of the total transform $\varphi_*B$ of the branch curve $B$ of $f_1$.  Since $\varphi$ and $\varphi^{-1}$ contract only rational curves, irreducible components of $B_\varphi$ with positive geometric genus correspond bijectively with the positive genus components of $B$.  In particular, the total genus of the branch curve is unchanged by $\varphi$.

The rational double covers of $\PP^2$ have been classified up to birational equivalence (see \cite{BayleBeauville,Calabri,CilibertoDoubleCovers}, though there is an unresolved technical issue in case \((a^2_2)\) of the proof of \cite[Proposition 9.4]{Calabri}).

\begin{thm}[{see \cite[Theorem 1.2]{CilibertoDoubleCovers} and \cite[Proposition 8.7]{Calabri}}]\label{thm:rational-double-plane-classification}
Suppose that $f\colon S\to\PP^2$ is a double cover by a normal projective surface $S$.  Then $S$ is rational if and only if there is a birational transformation $\varphi\colon \PP^2\tto \PP^2$ that equates $f$ to a double cover whose branch curve $B_\varphi$ is one of the following:
    \begin{enumerate}
        \item\label{item:calabri-thm-9.18-case-1} a smooth conic;
        \item\label{item:calabri-thm-9.18-case-2} a smooth quartic;
        \item\label{item:calabri-thm-9.18-case-3} an irreducible reduced sextic curve with two infinitely near triple points; or
        \item\label{item:calabri-thm-9.18-case-4} an irreducible reduced curve of degree $2d\geq 4$ with an ordinary singularity at a point of multiplicity $2d-2$ and at most one other singularity, which is an ordinary node.
    \end{enumerate}
In particular, if \(S\) is rational, then the branch curve of $f$ has at most one irreducible component with positive geometric genus.
\end{thm}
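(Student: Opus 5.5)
The main equivalence is a classification result, and I would take it from \cite[Theorem 1.2]{CilibertoDoubleCovers} and \cite[Proposition 8.7]{Calabri} rather than reprove it. For the reader's orientation, here is how the two directions go. The ``if'' direction is a check of the four normal forms, using the fact that rationality is a birational invariant of $S$ and that birationally equivalent double covers have birational total spaces. In case \eqref{item:calabri-thm-9.18-case-1} the double cover is a smooth quadric surface. In case \eqref{item:calabri-thm-9.18-case-2} it is a del Pezzo surface of degree two. In case \eqref{item:calabri-thm-9.18-case-3}, the canonical resolution yields a (weak) del Pezzo surface of degree one. Case \eqref{item:calabri-thm-9.18-case-4} is exactly Example~\ref{exmp:high-multiplicity-rational-double-cover}: projecting from the point of multiplicity $2d-2$ exhibits $S$ as a conic bundle over $\PP^1$.

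The ``only if'' direction is the substantive one. The strategy in the cited works is to pass to the canonical resolution, where the branch divisor is $\hat B = 2A$ with $A\cdot(A+K_T)=-2$ as used in the proof of Proposition~\ref{prop:5deg-bound}. One then applies Cremona transformations centered at points of high multiplicity of the branch curve to lower its degree. This is a Noether--Castelnuovo style minimization, which terminates in one of the listed normal forms. I expect this minimization to be the main obstacle, and it is where the technical issue in \cite{Calabri} noted above arises. I would simply cite it.

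What actually requires an argument here is the final sentence, which I would deduce from the classification together with the invariance of positive-genus components. Suppose $S$ is rational, and choose $\varphi$ as in the theorem, so that $B_\varphi$ is one of the curves \eqref{item:calabri-thm-9.18-case-1}--\eqref{item:calabri-thm-9.18-case-4}. By the discussion preceding the theorem, $\varphi$ and $\varphi^{-1}$ contract only rational curves. Hence $\varphi$ induces a bijection between the irreducible components of $B$ of positive geometric genus and those of $B_\varphi$. Concretely, a component of positive geometric genus cannot be contracted, so its strict transform is a component of $\varphi_*B$ occurring with multiplicity one. It therefore survives the reduction mod $2$ and lies in $B_\varphi$, and the same argument applies to $\varphi^{-1}$.

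In every case of the classification $B_\varphi$ is irreducible. A smooth conic or quartic is connected and smooth, hence irreducible, and cases \eqref{item:calabri-thm-9.18-case-3} and \eqref{item:calabri-thm-9.18-case-4} are irreducible by hypothesis. So $B_\varphi$ has at most one component of positive genus, and by the bijection so does $B$. The only point needing care is that multiplicity-one claim for the strict transform, and it follows because $\varphi$ restricted to a non-rational curve is birational onto its image.
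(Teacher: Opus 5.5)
Your proposal is correct and matches the paper, which likewise only cites \cite{CilibertoDoubleCovers} and \cite{Calabri} for the equivalence. The paper obtains the final sentence exactly as you do: $\varphi$ and $\varphi^{-1}$ contract only rational curves, so positive-genus components of $B$ and $B_\varphi$ correspond bijectively, and every curve in the list is irreducible. (One harmless slip in your orientation remarks: in case~\eqref{item:calabri-thm-9.18-case-3} the canonical resolution $\hat S$ is not itself a weak del Pezzo surface, since the infinitely near triple point already forces $K_{\hat S}^2\le -2$; it is only birational to a del Pezzo surface of degree one, which is all the ``if'' direction needs.)
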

Let us record the total genus of $B_\varphi$ (i.e., of $B$) in each case.  We have \(g(B_\varphi)=0\) in case~\eqref{item:calabri-thm-9.18-case-1}, \(g(B_\varphi)=3\) in case~\eqref{item:calabri-thm-9.18-case-2}, \(g(B_\varphi)=4\) in case~\eqref{item:calabri-thm-9.18-case-3}, and \(g(B_\varphi)=2d-2\) or \(2d-3\) in case~\eqref{item:calabri-thm-9.18-case-4}.
Note that each case of \(B_\varphi\) in Theorem~\ref{thm:rational-double-plane-classification} satisfies \(5\deg (B_\varphi) - \sum_{i=1}^k c_i - 2g(B_\varphi) \geq 10\) (and in fact \(3\deg (B_\varphi) - \sum_{i=1}^k c_i - 2g(B_\varphi) \geq 6\)); however, this birational classification does not imply a bound of the type in Proposition~\ref{prop:5deg-bound} because in general one cannot control how these quantities change under arbitrary birational transformations of \(\PP^2\).

\section{Hyperbolicity and transversality} \label{sec:hyperbolicity}
Now we combine Proposition~\ref{prop:5deg-bound} with an algebraic hyperbolicity bound of Chen to prove that, under the assumptions of Theorem~\ref{thm:main-thm}, $B$ must meet $\Delta$ transversely.

\begin{prop}\label{prop:degree-geq-14-transverse-intersection}
    Let \(\Delta\) be a very general plane curve of degree \(\geq 18\) and \(\piDoubleCover\colon \XDoubleCover\to\mathbb P^2\) be a double cover by a normal rational surface \(\XDoubleCover\). Then \(\Delta\) is not a component of the branch curve $B$ of $\piDoubleCover$, and \(B\) meets $\Delta$ transversely at some smooth point of $B$.
\end{prop}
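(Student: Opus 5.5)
The plan is to compare two inequalities on the branch curve \(B\). The upper bound comes from Proposition~\ref{prop:5deg-bound}, which limits how much the singularities and genus of \(B\) can be. The lower bound comes from Chen's algebraic hyperbolicity theorem for complements of very general plane curves. As I recall it, and as I will use it, Chen's theorem says: if \(\Delta\subset\PP^2\) is very general of degree \(d\), then every reduced irreducible curve \(C\not\subset\Delta\) satisfies
\[ 2g(C)-2+i(C,\Delta)\geq (d-c_0)\deg C ,\]
where \(i(C,\Delta)\) is the number of distinct points of the normalization \(C^\nu\) lying over \(\Delta\), and \(c_0\) is an explicit constant. The exact value of \(c_0\) is what will produce the threshold \(18\); I will track it rather than fix it now. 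Summing Chen's inequality over the \(s\) components of \(B\) gives
\[ 2g(B)-2s+i(B,\Delta)\geq (d-c_0)\deg B .\]

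\textbf{Step 1: \(\Delta\) is not a component of \(B\).} Suppose \(B=\Delta+B'\). Theorem~\ref{thm:rational-double-plane-classification} says \(B\) has at most one component of positive genus, and \(g(\Delta)=(d-1)(d-2)/2\) is large, so all components of \(B'\) are rational. Every point of \(B'\cap\Delta\) is a singular point of \(B\). Applying Chen's inequality to each (rational) component of \(B'\) shows they meet \(\Delta\) in many distinct points, roughly \((d-c_0)\deg B'+2\) in total. Each such point contributes \(c_i\geq 2\) to \(\sum c_i\). On the other side, \(2g(B)\geq (d-1)(d-2)\) is quadratic in \(d\). Feeding both into Proposition~\ref{prop:5deg-bound}, i.e.\ into \(5(d+\deg B')-\sum c_i-2g(B)\geq 8\), should give a contradiction once \(d\) is moderately large.

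\textbf{Step 2: a smooth transverse point exists.} Now \(\Delta\not\subset B\). Suppose, for contradiction, that at every smooth point of \(B\) lying on \(\Delta\) the intersection is non-transverse. Split the points of \(B^\nu\) over \(\Delta\) into two kinds:
\begin{itemize}
\item branches through smooth points of \(B\): each has local intersection multiplicity \(\geq 2\);
\item branches through singular points \(x_i\) of \(B\) lying on \(\Delta\): there are at most \(c_i\) of them at \(x_i\), and together they contribute at least \(c_i\) to \((B\cdot\Delta)_{x_i}\).
\end{itemize}
Let \(i_{\mathrm{sm}}\) be the number of branches of the first kind. Bézout gives \(d\deg B\geq 2i_{\mathrm{sm}}+\sum_{x_i\in\Delta}c_i\). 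Since also \(i(B,\Delta)\leq i_{\mathrm{sm}}+\sum_{x_i\in\Delta} c_i\), we get
\[ i(B,\Delta)\leq \tfrac12\Big(d\deg B+\sum_{i=1}^k c_i\Big).\]
Proposition~\ref{prop:5deg-bound} then gives \(\sum c_i\leq 5\deg B-2g(B)-8\). Substituting this into Chen's summed inequality, the \(g(B)\) terms cancel exactly. What remains is roughly
\[ \Big(\tfrac{d}{2}-c_0-\tfrac52\Big)\deg B+4-2s+\ldots\leq 0 .\]
With \(s\leq\deg B\), this is a contradiction once \(d\) exceeds an explicit bound, which should be \(18\) for the correct \(c_0\).

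\textbf{Main obstacle.} The crux is the bookkeeping in Step 2: the constants must come out exactly. This means getting the precise form of Chen's bound, and handling the \(-2s\) term, components of low degree, and lines. If the naive count falls short at \(d=18\), I would first refine the estimate at singular points. Use the actual intersection multiplicity of each branch (at least its multiplicity) instead of the crude \(\geq c_i\), and treat lines and conics among the components of \(B\) separately. A secondary concern in Step 1 is making sure the rational components of \(B'\) really produce enough singular points on \(\Delta\) to beat the \(5\deg B'\) slack.
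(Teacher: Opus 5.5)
\textbf{Step 2 has an error at exactly the point you flag as the crux.} Your overall strategy matches the paper's: Chen's bound, which holds with $c_0=4$, summed over the $s$ components of $B$ and played against Proposition~\ref{prop:5deg-bound}. But the genus terms do not cancel. Chen's side carries $2g(B)$. Substituting $\sum c_i\le 5\deg B-2g(B)-8$ into your halved bound $i(B,\Delta)\le\frac12(d\deg B+\sum c_i)$ contributes only $-g(B)$. What you actually get is
\[ \Big(\tfrac d2-\tfrac{13}{2}\Big)\deg B+2s+4\le g(B), \]
which is not a contradiction by itself. The $s$-term also enters with a plus sign, so it needs no handling.

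The remedy is not the refinements you list (finer local multiplicities, separate treatment of lines and conics). Instead, give back the factor $\frac12$ and apply Proposition~\ref{prop:5deg-bound} in the form $2g(B)+\sum c_i\le 5\deg B-8$:
\[ \Big(\tfrac d2-4\Big)\deg B+2s\le 2g(B)+\tfrac12\textstyle\sum c_i\le 2g(B)+\sum c_i\le 5\deg B-8 . \]
This gives $(\frac d2-9)\deg B+2s+8\le 0$, which is impossible for $d\ge 18$. It is exactly the paper's estimate, which the paper phrases via distinct points: $\#(\Delta\cap B)\ge i(B,\Delta)-\sum c_i\ge (d-9)\deg B+8+2s>\frac12 d\deg B$, forcing an intersection point of multiplicity one.

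\textbf{Step 1 differs from the paper's argument and also works,} provided you count branches rather than distinct points (your ``secondary concern''). Write $B=\Delta+B'$ and let $s'$ be the number of components of $B'$. Each $x\in\Delta\cap B'$ is a singular point of $B$ with $c_x=1+\mathrm{mult}_xB'$. This exceeds the number of branches of $B'$ at $x$, so by Chen
\[ \textstyle\sum c_i\ge i(B',\Delta)\ge(d-4)\deg B'-2g(B')+2s'. \]
Since $g(B)=g(\Delta)+g(B')$, the $g(B')$ terms cancel in Proposition~\ref{prop:5deg-bound}. What remains is $5d-(d-1)(d-2)-(d-9)\deg B'-2s'\ge 8$, which is false; this covers the case $B=\Delta$ too. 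In particular you need neither the rationality of the components of $B'$ nor the classification.

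The paper instead uses Theorem~\ref{thm:rational-double-plane-classification}: a branch component of genus $\ge 136$ must be birational to a case~(4) curve, hence hyperelliptic, which a smooth plane curve of degree $\ge 3$ never is. That argument is shorter and uses only smoothness of $\Delta$. Your version uses very generality once more, but it makes the proposition, and hence Theorem~\ref{thm:main-thm}, independent of the classification theorem. The paper notes an unresolved technical issue in one case of a published proof of that theorem.
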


\begin{proof}
    We first show that $\Delta$ is not a component of $B$. Suppose to get a contradiction that $\Delta$ is a component of $B$. Note that the genus of $\Delta$ is at least 136, so by Theorem~\ref{thm:rational-double-plane-classification},
    we see that $\Delta$ is birational to case~\eqref{item:calabri-thm-9.18-case-4} and in particular must be a hyperelliptic curve. However, a smooth plane curve of degree at least 3 is never hyperelliptic, so we see that this case is impossible.

    It suffices to show that the number of set-theoretic intersections between $\Delta$ and $B$ exceeds half their scheme theoretic intersection number; i.e., that
    \[ \# (\Delta \cap B) - \frac{(\deg \Delta)(\deg B)}{2} > 0.\]

    Let \(s\) be the number of irreducible components of \(B\). For each irreducible component \(B_j\) of \(B\),
    let \(\nu_j\colon B_j^\nu\to\mathbb P^2\) be the normalization of \(B_j\). By \cite[Theorem 1.7]{Chen04} we have the inequality
    \begin{equation}\label{eqn:alg-hyperbolicity-inequality-each-irred-comp}
        \#(\nu_j^{-1}(\Delta)) \geq (\deg\Delta - 4)(\deg B_j) - 2g(B_j^\nu) + 2.
    \end{equation}
    Since the normalization \(\nu\colon B^\nu\to B \) is the disjoint union \(\bigsqcup_{j=1}^s B_j^\nu\), we have
    \begin{equation}\label{eqn:alg-hyperbolicity-inequality-sum}
        \begin{split}
            \#(\nu^{-1}(\Delta)) &= \sum_{j=1}^s \left(\#(\nu_j^{-1}(\Delta))\right) \\ 
            &\geq (\deg\Delta - 4) (\deg B) - 2g(B) + 2s \quad\text{by~\eqref{eqn:alg-hyperbolicity-inequality-each-irred-comp}} .
        \end{split}
    \end{equation}

    For a point \(x_i\in\mathbb P^2\), let \(c_i\) be the multiplicity of \(B\) at \(x_i\).
    The number of distinct intersection points of \(\Delta\) and \(B\) satisfies
    \begin{equation*}
        \begin{split}
            \#(\Delta\cap B) &\geq \#(\nu^{-1}(\Delta)) - \sum_{x_i \in \Delta\cap B} (c_i - 1) \\
            &\geq \#(\nu^{-1}(\Delta)) - \sum_{x_i\in B} (c_i-1) \geq \#(\nu^{-1}(\Delta)) - \sum_{x_i\in\Sing(B)} c_i.
        \end{split}
    \end{equation*}
    Combining this inequality with~\eqref{eqn:alg-hyperbolicity-inequality-sum} we get
    \begin{equation*}
        \begin{split}
            \# (\Delta \cap B)
            & \geq
            (\deg\Delta - 4) (\deg B) - 2g(B) + 2s - \sum_{x_i\in\Sing(B)} c_i \\
            &= (\deg\Delta - 9) (\deg B) + 5\deg B - \sum_{x_i\in\Sing(B)} c_i - 2 g(B) + 2s \\
            &\geq (\deg\Delta - 9) (\deg B) + 8 + 2s
        \end{split}
    \end{equation*}    
    where the last inequality holds by Proposition~\ref{prop:5deg-bound}.
    So
    \[\# (\Delta \cap B) - \frac{(\deg \Delta)(\deg B)}{2} \geq \left(\frac{1}{2}\deg\Delta - 9\right) (\deg B) + 8 + 2s .\]
    Since \(\deg\Delta \geq 18\), this quantity is strictly positive,
    as required.

\end{proof}

\begin{proof}[Proof of Theorem~\ref{thm:main-thm}]
    Let \(\XDoubleCover\to\mathbb P^2\) be a generically finite morphism of degree \(2\) from a rational surface.
    Let \(\XDoubleCover^\nu\) be the normalization of \(\XDoubleCover\), and let \(\XDoubleCover^\nu\to\bar{\XDoubleCover}\to\mathbb P^2\) be the Stein factorization. By Proposition~\ref{prop:degree-geq-14-transverse-intersection}, the branch locus $B$ of \(f\colon\bar{\XDoubleCover}\to\mathbb P^2\) meets \(\Delta\) transversely at some smooth point of $B$, so by Corollary~\ref{cor:deg-2-multisection-restriction}
    the base change \(X_{\bar{\XDoubleCover}}\to\bar{\XDoubleCover}\) does not admit a rational section.
\end{proof}

\bibliographystyle{plain}
\bibliography{main}

\end{document}